\documentclass[a4paper,10pt,notitlepage,reqno]{article}
\usepackage[english]{babel}
\usepackage[latin1]{inputenc}
\usepackage[T1]{fontenc}
\usepackage{amssymb,amsthm,amsmath} 
\usepackage{mathtools}
\usepackage{mathrsfs}
\usepackage{enumitem}
\usepackage{authblk}
\usepackage{color}
\usepackage{geometry}
\usepackage{hyperref}
\newcommand{\sii}{L^2}

\newcommand{\der}{\mathrm{d}}
\newcommand{\Hilbert}{\mathcal{H}}

\begingroup
    \makeatletter
    \@for\theoremstyle:=definition,remark,plain\do{%
        \expandafter\g@addto@macro\csname th@\theoremstyle\endcsname{%
            \addtolength\thm@preskip\parskip
            }%
        }
\endgroup
\newtheorem{Theorem}{Theorem}

\theoremstyle{definition}

\def\OMIT#1{}
\usepackage[normalem]{ulem}
\definecolor{DarkGreen}{rgb}{0,0.5,0.1} 

\newcommand\soutD{\bgroup\markoverwith
{\textcolor{DarkGreen}{\rule[.5ex]{2pt}{1pt}}}\ULon}
\newcommand\soutP{\bgroup\markoverwith
{\textcolor{blue}{\rule[.5ex]{2pt}{1pt}}}\ULon}
\newcommand{\Hm}[1]{\leavevmode{\marginpar{\tiny%
$\hbox to 0mm{\hspace*{-0.5mm}$\leftarrow$\hss}%
\vcenter{\vrule depth 0.1mm height 0.1mm width \the\marginparwidth}%
\hbox to
0mm{\hss$\rightarrow$\hspace*{-0.5mm}}$\\\relax\raggedright #1}}}

\newcommand{\Com}{\mathbb{C}}
\newcommand{\dom}{\mathop{\mathsf{dom}}}
\begin{document}
%

\title{\textbf{\LARGE
Semi-Dirac semi-metals quantum dots
}}
\author{Tuyen Vu }

\affil{Faculty of Mathematics and Informatics, Hanoi University of Science and Technology, Hanoi, Vietnam; tuyen.vuthibich@hust.edu.vn}

\date{\small 
30 July 2026
}
\maketitle
\vspace{-5ex} 
\begin{abstract}
\noindent
Novel findings on nanostructures in semi-metal  and semi-graphene materials are discussed regarding  rectangular quantum dots with  zigzag edges, MIT bag models, and Dirichlet boundary conditions.
The article theoretically investigates some models of quantum dots for these hybrid materials and the point spectra of the semi-Dirac semi-Laplacians  via  one-dimensional Laplace and Dirac operators. We also prove the self-adjointness of the semi-Laplacian defined on a rectangle, subject to zigzag and hard-wall boundary conditions, and study its spectral properties.
\end{abstract}
\footnotetext{\emph{Keywords}. Dirac quantum dots, MIT boundary conditions, zigzag boundary conditions , spectrum.}

\footnotetext{\emph{Mathematics Subject Classification}.  47A10, 35J05, 81Q37, 81V65.}

%

\section{Motivation}
%
Semi-Dirac semi-metals  are novel artificial materials that have abstracted a lot of research interests,  (see \cite{Banerjee_2009,Delplace-Montambaux_2010,Pardo-Pickett_2009,Pardo-Pickett_2010,Xiong,Zhou} and references therein). The quasi-particles nanostructures exhibit a hybrid combination of conventional zero-gap semiconductors in one  direction and  artificial materials like graphene in the orthogonal one. The novel materials disperse  quadratically and linearly  by virtue of the initiated anisotropic band structures.

Motivate from the mathematical  work \cite{KA}, where the authors  establish a tight-binding model, regardless of all the physical constants  via the Hamiltonian 
\begin{equation}\label{1}
  D := 
  \begin{pmatrix}
    -i\partial_y & -\partial_x^2 + \delta \\
    -\partial_x^2 + \delta & i\partial_y
  \end{pmatrix} \,,
\end{equation}
$D$ acts on
the Hilbert space $\Hilbert := \sii(\Omega)^2$
consisting of all $\Com^2$-valued spinors
$$
  \psi = 
  \begin{pmatrix}
    \psi_1 \\ \psi_2
  \end{pmatrix}
  \qquad \mbox{such that} \qquad
  \|\psi\|_{\Hilbert}^2 := \int_{\Omega} |\psi|^2 < \infty 
  \,,
$$
here $\delta$ is a constant which stands for the gap parameter, we use the  Euclidean norm $|\psi|:=\sqrt{|\psi_1|^2+|\psi_2|^2}$
and  denote by $\sii(\Omega)$ the  space
of Lebesgue square-integrable functions defined on the domain~$\Omega$. In the present article, we study  spectral properties of the semi-Dirac semi-Laplacian $D$ on rectangular domains. Moreover, we consider the Hamiltonian $G$  in semi-graphene semi-metals, which is analogously introduced in \cite{Banerjee_2009,Delplace-Montambaux_2010}.
$G$ can be considered  as the appropriate low energy description of quasiparticles dispersing linearly in one direction and quadratically in the perpendicular one, similarly to $D$. 
\begin{equation}\label{2}
  G := 
  \begin{pmatrix}
   0 &  -\partial_y - \partial_x^2 + \delta   \\
  \partial_y - \partial_x^2 + \delta  & 0
  \end{pmatrix} \,,
\end{equation}
This description seems more relevant to the mathematical behavior of graphene-metals particles, where two variables are rigorously combined as a sum of linear and quadratic elements in \eqref{1}, and also guarantees the operator self-adjointness  in $\mathbb{R}^2$  as  shown  in \cite{KA} by means of the Fourier transforms. Moreover, the self-adjointness is simultaneously valid for rectangular domains, subject to the mixed boundary conditions, i.e., MIT bag model and Dirichlet boundary conditions. The MIT bag model is a crucial model suggested in 1970s to study confinement of quarks in dimension three and it can be interpreted as infinite mass boundary conditions, see \cite{Arri,J.B,Cho1,Cho2}. In two dimensions, Dirac operators with MIT bag model are described by the boundary conditions $\psi_2 = i (n_1 + i n_2) \psi_1$ in the trace sense where $n= 
  \begin{psmallmatrix}
  n_1 \\ n_2 
  \end{psmallmatrix}
  $ 
stands for the outward unit normal of the domain, and other special boundary conditions such as zigzag boundary conditions are used in the description of graphene, cf. \cite{Ak,Ben1,Ben2,Hanna}.
Our work is  motivated by the work  \cite{Sch}, which constructs local boundary conditions to realise a Dirac differential expression as a self-adjoint operator on a regular bounded domain.

Recall that if we consider  $\Omega$ is a  bounded domain in $\mathbb{R}^2$ then the Dirichlet boundary conditions arise in a quantum system where non-relative particles constrained to a semiconductor nanostructure with hard-wall boundaries. On the other hand, if the lattice termination border of a graphene quantum dot is perpendicular to one of the neighbor bonds then zigzag boundary conditions naturally occur. Besides, MIT bag models or infinite mass boundary conditions expose as an idealisation of the system as the limit of the Dirac operators  with a large finite mass-term localised outside a quantum dot. We will investigate
 some novel semi-metals models as those on a cylinder or cone for nanoribbons in \cite{FS} where the self-adjointness and the point spectrum of the graphene operator on a rectangle with zigzag and periodic conditions are completely shown. In the article, Section \ref{2} introduces the semi-Dirac semi-Laplacian defined on rectangular dots with zigzag edges and hard-wall boundaries, investigates the eigenvalues of the operator $D$, proves the self-adjointness of $G$ defined on  rectangles and verifies that $0$ lies in the essential spectrum of $G$, and thus we get that the spectrum of $G$ is not purely discrete. In Section \ref{3}, we describe the semi-Dirac semi-metals defined on rectangles, subject to infinite mass or MIT bag model and Dirichlet boundary conditions and investigate the point spectrum of the semi-graphene operator.

\section{Rectangular dots with zigzag edges and hard-wall boundaries}\label{2}
Let us take $\Omega$ as
 a rectangular quantum dot enclosed by  the lines $x=0, x=a$ and zigzag-edge terminations at $y=0$ and $y=b$, provided that $a, b > 0$. Denote by $\gamma_1, \gamma_2, \gamma_3, \gamma_4$ the corresponding partial boundaries of $\Omega$ with respect to the zigzag edges at $y=0, y=b$ and the line segments at $x= 0, x=a$. We introduce the semi-Dirac problem, subject to the zigzag and Dirichlet boundary conditions 
\begin{equation}
\begin{aligned}
 D &:= 
  \begin{pmatrix}
    -i\partial_y & -\partial_x^2 + \delta \\
    -\partial_x^2 + \delta & i\partial_y
  \end{pmatrix} \,,\\
\dom D := \{
    \psi = 
  \begin{pmatrix}
    \psi_1 \\ \psi_2
  \end{pmatrix} \in \Hilbert : \
    \psi_1 \in H^1(\Omega; \mathbb{C}),  \, &\psi_2 \in H^1_0(\Omega; \mathbb{C}),\quad \partial_x^2\psi \in \Hilbert, \psi_1 = 0 \in H^\frac{1}{2}(\gamma_3), \psi_1 =0 \in H^\frac{1}{2}(\gamma_4)
  \} \,,
\end{aligned}
\end{equation}
where $H^1(\Omega; \mathbb{C}), H^1_0(\Omega; \mathbb{C})$ stand for the complex-valued $L^2$-based Sobolev space of one time weakly differentiable functions and the closure of  the space of infinitely differentiable functions with compact support on $\Omega$ or the test functions  $C^\infty_0(\Omega; \mathbb{C})$ in $H^1(\Omega; \mathbb{C})$, respectively. We also define $H^1(\Omega; \mathbb{C}^2)$ as a Sobolev space consisting of $\mathbb{C}^2$-valued functions with square-integrable first derivatives.
It can be seen as usual that the operator is not self-adjoint in $H^1$-setting for zigzag boundary conditions, see \cite{Hanna,FS}. 

With the aim of demonstrating the non-self-adjoint of the operator, we prove that the symmetry of this operator is invalid. Indeed, let us take a spinor $u = 
  \begin{pmatrix}
    u_1 \\ 0
  \end{pmatrix} \in \dom D$ such that $u_1 =0$ on $\gamma_1$, $u_1 > 0$ a.e. on $\gamma_2$.
Evaluating the following integral by integration by parts, we have
\begin{equation}
\begin{aligned}
(u, Du)&= \int_\Omega \overline{u_1} (-i \partial_y u_1) \der x \, \der y \\
&= \int_\Omega i \partial_y \overline{u_1} u_1 \der x \, \der y - \int_0^a i \overline{u_1} u_1|_{\gamma_2} + i \int_0^a \overline{u_1}u_1|_{\gamma_1} \\
&= (Du, u) - i\|u_1\|^2_{L^2(\gamma_2)}\\
&\neq (Du, u) \,.
\end{aligned}
\end{equation}
The difference appears due to the fact that the $L^2$-norm of $u_1$ on the boundary $\gamma_2$ is rigorously positive based on the establishment of this function. As a result, the operator is not symmetric. It follows that $D$ is not  self-adjoint, see \cite{Reed}. As the century turned, physicists seek to extend quantum mechanics by considering observables represented by non-self-adjoint operators and many fascinating observations come from physics of non-self-adjoint systems. Thus, the subject has become increasingly intriguing in recent years. 

If we now consider $\Omega$ as  the usual rectangle enclosed by the lines $x=0, x=a, y=0, y=b$ as analogous examples shown in \cite{FS}.   Using the similar approach, we derive  that $D$ is not self-adjoint. 
%
\begin{Theorem}
$\alpha_{n,k} = \pm \sqrt{\big(\delta + \frac{n^2\pi^2}{a^2}\big)^2 + \frac{k^2\pi^2}{b^2}}$ are the eigenvalues of $D$, where $ 1\leq n \in \mathbb{N}, k \in \mathbb{N}$.
\end{Theorem}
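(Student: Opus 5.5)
The plan is to verify the statement directly, by exhibiting for each admissible pair $(n,k)$ an explicit eigenfunction obtained by separation of variables; we do not attempt to show that these are all the eigenvalues, only that each $\alpha_{n,k}$ belongs to the point spectrum of $D$. The conditions in $\dom D$ require $\psi_1 = 0$ on $\gamma_3,\gamma_4$ (the sides $x=0,a$), put no condition on $\psi_1$ at $y=0,b$, and require $\psi_2 \in H^1_0(\Omega;\Com)$. In view of this we make the ansatz
\begin{equation*}
\psi(x,y) = \sin\Big(\frac{n\pi x}{a}\Big)\begin{pmatrix} f(y)\\ g(y)\end{pmatrix}, \qquad 1\le n\in\mathbb{N}.
\end{equation*}
This satisfies the lateral conditions automatically. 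Moreover $-\partial_x^2+\delta$ acts on it as multiplication by $\mu_n := \delta + n^2\pi^2/a^2$.

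The eigenvalue equation $D\psi = \alpha\psi$ then reduces to the ODE system
\begin{equation*}
-i f' + \mu_n g = \alpha f, \qquad \mu_n f + i g' = \alpha g \quad \text{on } (0,b),
\end{equation*}
with boundary conditions $g(0)=g(b)=0$ coming from $\psi_2\in H^1_0$, and no condition on $f$. Assuming $\mu_n \neq 0$, we solve the second equation for $f = (\alpha g - i g')/\mu_n$ and substitute into the first. The terms $-i\alpha g'$ cancel and we obtain the Dirichlet problem $-g'' = (\alpha^2-\mu_n^2) g$, $g(0)=g(b)=0$. Its nontrivial solutions are $g = \sin(k\pi y/b)$ with $\alpha^2 = \mu_n^2 + k^2\pi^2/b^2$, which is exactly $\alpha = \alpha_{n,k}$ with either sign. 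The corresponding first component is
\begin{equation*}
f(y) = \frac{1}{\mu_n}\Big(\alpha_{n,k}\sin\frac{k\pi y}{b} - i\frac{k\pi}{b}\cos\frac{k\pi y}{b}\Big).
\end{equation*}
It remains to check that $\psi\in\dom D$. Both components are smooth on $\overline\Omega$, so $\psi_1\in H^1$ and $\partial_x^2\psi\in\Hilbert$. The traces of $\psi_1$ on $x=0,a$ vanish because of the factor $\sin(n\pi x/a)$, and $\psi_2$ vanishes on all four sides, hence lies in $H^1_0$. Finally $\psi\neq 0$ because $g\not\equiv 0$.

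The main obstacle I expect is the bookkeeping of degenerate cases rather than the computation itself. First, for $k=0$ the Dirichlet problem forces $g\equiv 0$, and then $\mu_n f = \alpha g = 0$ forces $f\equiv 0$ when $\mu_n\neq0$. So the construction needs $k\ge1$, and I would state explicitly that $k$ ranges over positive integers, or else treat $k=0$ separately, noting that it yields no eigenfunction of this form. Second, if $\delta = -n^2\pi^2/a^2$ for some $n$, then $\mu_n=0$ and the system decouples into $-if'=\alpha f$ and $ig'=\alpha g$. There I would take $g=0$ and $f = e^{i\alpha y}$, which gives eigenvalues for every $\alpha$ and in particular includes $\pm k\pi/b$, so the claim still holds. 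I would verify these two cases separately and then conclude.
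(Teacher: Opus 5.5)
Your proposal is correct and follows the same route as the paper. You separate variables with the Dirichlet modes $\sin(n\pi x/a)$ in $x$, reduce to the one-dimensional zigzag Dirac system on $(0,b)$, eliminate the first component to get a Dirichlet problem for $\psi_2$ with $\alpha^2=\mu_n^2+k^2\pi^2/b^2$, and then reconstruct $\psi_1$; your $f$ agrees with the paper's constants $C_3,C_4$. Your explicit check that $\psi\in\dom D$, and your treatment of the degenerate cases, go beyond what the paper does: $k=0$ gives only the trivial solution, so $k\geq 1$ is needed, and when $\mu_n=0$ every $\alpha\in\Com$ is an eigenvalue.
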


\begin{proof}
Let us consider the spectrum problem of the one-dimensional self-adjoint Laplacian defined on $[0,a]$, subject to Dirichlet boundary conditions
\begin{equation}
 (-\partial^2_x + \delta)\varphi(x) = \beta \varphi(x) \,,
\end{equation}
then the spectrum  is purely discrete and explicitly computed, see \cite{Henrot,Laugesen}
$$\delta < \beta_1 \leq \beta_2 \leq \beta_3 \leq ...\longrightarrow +\infty \,.$$
Here all $\beta_i, i\geq 1$ are the isolated eigenvalues of this operator. Especially, the squares of resonant frequencies of vibrating string with fixed edges are given by $\big\{\beta_n = \delta + \frac{n^2\pi^2}{a^2}\big\}_{n = 1}^\infty$ and the corresponding states $ \{\varphi_n(x)\}_{n \geq 1}$ of the quantum system are explicitly computed as follows
\begin{equation}
\varphi_n(x) = \sqrt{\frac{2}{a}} \sin \frac{n \pi x}{a} \,.
\end{equation}

Moreover, we investigate the spectrum of the one-dimensional Dirac operator employed on $[0,b]$ with zigzag boundary conditions
\begin{equation}\label{1D Dirac zigzag}
 \left\{
 \begin{aligned}
    -i\partial_y \psi_1  + \beta \psi_2 &= \lambda \psi_1 \,,\\
\beta\psi_1 +    i\partial_y \psi_2 &= \lambda \psi_2 \,,\\
    \psi_1 \in  H^1\big((0,b); \mathbb{C}\big) &\,,     \psi_2 \in  H^1_0\big((0,b); \mathbb{C}\big) 
   \,.
    \end{aligned}
  \right.
\end{equation}
We notice that this Dirac operator, subject to zigzag boundary conditions is not self-adjoint   in $H^1$-setting. Here $\lambda$ is an eigenvalue of the Dirichlet Laplacian 
$$ -\partial^2\psi_2 + \beta^2 \psi_2 = \lambda^2 \psi_2 \,.$$ 
As a result, we have the solution $$\psi_2(y) = C_1 \cos\sqrt{\lambda^2-\beta^2}y + C_2\sin\sqrt{\lambda^2-\beta^2}y \,,$$
where $\psi_2(0)= \psi_2(b) = 0$. It follows that $$C_1 = 0, \lambda = \pm \sqrt{\beta^2 + \frac{k^2\pi^2}{b^2}}, k \in \mathbb{N} \,.$$

Analogously, we also have
$$\psi_1 = C_3 \cos\sqrt{\lambda^2-\beta^2}y + C_4 \sin\sqrt{\lambda^2-\beta^2}y \,.$$
Substituting these identities back into System \eqref{1D Dirac zigzag}, we obtain
$$ C_2 = \frac{\beta C_4}{\lambda} \,, C_3 = -i \sqrt{\lambda^2-\beta^2} \frac{C_4}{\lambda} \,.$$
Hence, $\lambda = \pm \sqrt{\beta^2 + \frac{k^2\pi^2}{b^2}}, \beta = \delta + \frac{n^2\pi^2}{a^2}, 1\leq n \in \mathbb{N}, k \in \mathbb{N}$, and thus 
$\alpha_{n,k} = \pm \sqrt{\big(\delta + \frac{n^2\pi^2}{a^2}\big)^2 + \frac{k^2\pi^2}{b^2}}$
 are the eigenvalues of $D$.
\end{proof}


Denote by $\mathcal{X}$ the closure of $C^\infty_0(\Omega)$ the space of smooth functions with compact support in $\Omega$, which is endowed with the norm  $\|.\|_\mathcal{X} = \sqrt{\|.\|^2_{H^1(\Omega)} + \|\partial^2_x.\|^2_{L^2(\Omega)}}$.
Now we introduce the self-adjoint operator, which is the observable in the semi-graphene semi-conductor system 
\begin{equation}
\begin{aligned}
  \quad \quad & \quad \quad \quad \quad  G := 
   \begin{pmatrix}
   0 &  -\partial_y - \partial_x^2 + \delta   \\
  \partial_y - \partial_x^2 + \delta  & 0
  \end{pmatrix} \,,\\
 & \dom G := \{
    \psi = 
  \begin{pmatrix}
    \psi_1 \\ \psi_2
  \end{pmatrix} \in \Hilbert : \
    G\psi \in \Hilbert,
\psi_2 \in \mathcal{X}     
  \} \,.
  \end{aligned}
\end{equation}
\begin{Theorem}
$(G,\dom G)$ is a self-adjoint operator on a rectangular dot.
\end{Theorem}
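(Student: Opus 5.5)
The plan is to write $G$ in supersymmetric off-diagonal form and reduce self-adjointness to a closedness statement for one scalar operator. Set
\[
T := \partial_y - \partial_x^2 + \delta, \qquad \dom T := \mathcal{X},
\]
viewed as an operator in $\sii(\Omega)$. Its formal adjoint is $T^{+} = -\partial_y - \partial_x^2 + \delta$. Let $T^*$ be the Hilbert adjoint of $T$; it is the maximal realisation of $T^{+}$, with domain $\{u \in \sii(\Omega) : T^{+}u \in \sii(\Omega)\}$ in the distributional sense.

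The domain of $G$ reads: $\psi_2 \in \mathcal{X}$, $T\psi_2 \in \sii$ (automatic), and $T^{+}\psi_1 \in \sii$ with $\psi_1$ otherwise unconstrained. Hence
\[
G = \begin{pmatrix} 0 & T^* \\ T & 0 \end{pmatrix}, \qquad \dom G = \dom T^* \oplus \dom T .
\]
A standard lemma says that for a densely defined \emph{closed} $T$, the operator $\begin{psmallmatrix} 0 & T^* \\ T & 0\end{psmallmatrix}$ on $\dom T^*\oplus\dom T$ is self-adjoint. The adjoint of this block operator is $\begin{psmallmatrix} 0 & T^* \\ T^{**} & 0\end{psmallmatrix}$ on $\dom T^{*}\oplus \dom T^{**}$, and $T^{**}=T$ precisely when $T$ is closed. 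The proof of the theorem therefore comes down to two points:
\begin{enumerate}
\item $T$ is densely defined, which is clear since $C_0^\infty(\Omega)\subset\mathcal{X}$;
\item $T$ is closed on $\mathcal{X}$, equivalently, on $C_0^\infty(\Omega)$ the graph norm $\|u\| + \|Tu\|$ is equivalent to $\|\cdot\|_{\mathcal{X}}$.
\end{enumerate}

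The main step is the a priori estimate. For $u \in C_0^\infty(\Omega)$, integrate by parts with no boundary terms:
\[
\|Tu\|^2 = \|\partial_y u\|^2 + \|(-\partial_x^2+\delta)u\|^2 + 2\,\mathrm{Re}\,(\partial_y u, (-\partial_x^2+\delta)u).
\]
The cross term vanishes. Indeed, $(\partial_y u, \delta u)$ is purely imaginary, and $(\partial_y u, -\partial_x^2 u) = (\partial_x\partial_y u, \partial_x u)$, whose real part is $\tfrac12\int \partial_y|\partial_x u|^2 = 0$. This gives
\[
\|Tu\|^2 = \|\partial_y u\|^2 + \|(-\partial_x^2+\delta)u\|^2 .
\]
Next, $\|(-\partial_x^2+\delta)u\|^2 = \|\partial_x^2u\|^2 + 2\delta\|\partial_x u\|^2 + \delta^2\|u\|^2$. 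Together with the interpolation bound $\|\partial_x u\|^2 \le \|u\|\,\|\partial_x^2 u\| \le \varepsilon\|\partial_x^2 u\|^2 + C_\varepsilon\|u\|^2$, this yields
\[
c\,\|u\|_{\mathcal{X}}^2 \le \|Tu\|^2 + C\|u\|^2
\]
for any real $\delta$; the case $\delta<0$ is where $\varepsilon$ must be chosen small. The reverse inequality is immediate.

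Consequently, the closure of $T|_{C_0^\infty}$ has domain exactly the $\mathcal{X}$-closure of $C_0^\infty(\Omega)$, which is $\mathcal{X}$ by definition. So $T$ is closed.

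The delicate point I anticipate is identifying $\dom G$ with $\dom T^*\oplus\dom T$. This requires reading the condition $G\psi\in\Hilbert$ distributionally, so that $\psi_1$ ranges over the maximal domain of $T^{+}$. To see this equals $\dom T^*$, note that $(Tu,\psi_1)=(u,T^{+}\psi_1)$ for test functions $u$ and the distributional $T^{+}\psi_1$; by density of $C_0^\infty(\Omega)$ in $\mathcal{X}$, this identity extends to all $u\in\mathcal{X}$ and characterises $T^*$.

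With these identifications, the abstract lemma gives self-adjointness of $G$. The lemma itself is checked directly: $(\phi,G\psi)=(G\phi,\psi)$ for all $\psi\in\dom G$ holds if and only if $\phi_2\in\dom T^{**}=\dom T$ and $\phi_1\in\dom T^*$.
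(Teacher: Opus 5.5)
Your proof is correct in substance and takes a different route from the paper's, but there is one bookkeeping slip to fix first. In $G$ the operator acting on $\psi_2$ is $-\partial_y-\partial_x^2+\delta$, and the one acting on $\psi_1$ is $\partial_y-\partial_x^2+\delta$. So with your $T$, the conditions should read $T^{+}\psi_2\in L^2$ and $T\psi_1\in L^2$, not the other way round. Also, the matrix $\begin{psmallmatrix}0&T^*\\T&0\end{psmallmatrix}$ would put $\psi_1$, not $\psi_2$, into $\dom T=\mathcal{X}$.

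The consistent set-up is as follows. Let $T$ be $-\partial_y-\partial_x^2+\delta$ with domain $\mathcal{X}$. Then $T^*$ is the maximal realisation of $\partial_y-\partial_x^2+\delta$, and $G=\begin{psmallmatrix}0&T\\T^*&0\end{psmallmatrix}$ on $\dom T^*\oplus\dom T$. The block lemma holds in this form too. Your identity $\|Tu\|^2=\|\partial_y u\|^2+\|(-\partial_x^2+\delta)u\|^2$ does not depend on the sign of $\partial_y$, so everything else goes through verbatim.

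The paper never uses the block structure. It declares symmetry obvious and then computes $\dom G^*$ directly. It tests the adjoint identity with $(\varphi_1,0)$, $\varphi_1\in C_0^\infty(\mathbb{R}^2)$, which is admissible since $\psi_1$ carries no boundary condition. This shows that the zero extension $\tilde u_2$ satisfies $(-\partial_y-\partial_x^2+\delta)\tilde u_2\in L^2(\mathbb{R}^2)$. The symbol identity $|\xi_1^2+\delta-i\xi_2|^2=(\xi_1^2+\delta)^2+\xi_2^2$ then gives $\partial_y\tilde u_2,\partial_x^2\tilde u_2\in L^2(\mathbb{R}^2)$, and the paper concludes $u_2\in\mathcal{X}$. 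Your vanishing cross term is exactly the position-space form of that symbol identity.

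Your route gains three things:
\begin{itemize}
\item Symmetry becomes automatic. The paper's ``apparent'' symmetry actually rests on the pairing between the maximal domain and $\mathcal{X}$, which you justify by density.
\item No Fourier analysis is needed.
\item Most importantly, you avoid the paper's last step: that a function whose zero extension lies in the anisotropic space on $\mathbb{R}^2$ belongs to the $\mathcal{X}$-closure of $C_0^\infty(\Omega)$. The paper asserts this in one line, but it depends on the regularity of $\partial\Omega$. It holds for a rectangle by dilating towards the centre and mollifying, but fails for slit domains.
\end{itemize}
Your argument uses no geometry and works on any open $\Omega$. In return, the paper's route gives a concrete description of the $\psi_2$-part of $\dom G$ via zero extension, which makes the hard-wall condition explicit.
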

\begin{proof}
It is apparent that the operator is symmetric, and thus $G \subset G^*$, where $G^*$ is the adjoint operator of $G$.

If $u=\begin{pmatrix}
u_1\\
u_2
\end{pmatrix}
\in \dom G^*$ then for all $v=\begin{pmatrix}
v_1\\v_2
\end{pmatrix}
\in \dom G$, there exists $\eta \in L^2\big(\Omega; \mathbb{C}^2\big)$ such that
\begin{equation}\label{adjoint identity}
(u, G v)=(\eta, v) \,.
\end{equation}
For all $v \in C^\infty_0(\Omega) \subset \dom G$, we derive that $G u \in \Hilbert$ due to \eqref{adjoint identity}. 
Now if we consider for all $v=\begin{pmatrix}
v_1\\0
\end{pmatrix}
\in \dom G$ and denote the trivial extension of $u_2$ to $\mathbb{R}^2$ be $\tilde{u}_2$ and $ A:= \partial_y - \partial_x^2 + \delta$ then for all $\varphi = \begin{pmatrix} \varphi_1 \\0 \end{pmatrix} \in C^\infty_0\big(\mathbb{R}^2; \mathbb{C}^2\big)$
\begin{align}
\left\langle A\tilde{u}_2, \varphi_1\right\rangle &= \langle \tilde{u}_2, A\varphi_1 \rangle
= \int_\Omega \overline{u_2} A\varphi_1 \,,\\
&= (u, G\varphi) = (\eta, \varphi) \leq C \|\varphi_1\|_{L^2(\mathbb{R}^2)}\,,
\end{align}
where $\langle\cdot,\cdot\rangle$ stands for the duality bracket of distributions. We deduce that $A \tilde{u}_2 \in L^2(\mathbb{R}^2; \mathbb{C})$. Employing the Fourier transform, it yields that $\tilde{u}_2 \in H^1(\mathbb{R}^2; \mathbb{C}), \partial_x \tilde{u}_2 \in L^2(\mathbb{R}^2; \mathbb{C})$. By virtue of the definition of $\tilde{u}_2$, we obtain that $u_2 \in \mathcal{X}$. It concludes the proof for the self-adjointness of $G$.
\end{proof}

Now we are in a position to investigate the spectrum of $(G,\dom G)$. If $(\lambda,\psi)$ is an eigenpair of the operator then we have the system
\begin{equation}\label{eigenproblem1}
\left\{
\begin{aligned}
(-\partial_y -\partial^2_x + \delta) \psi_2 &= \lambda \psi_1 \,,\\
(-\partial_x^2 + \delta + \partial_y)\psi_1 &= \lambda \psi_2 \,,\\
\psi\in \Hilbert, \psi_2 \in \mathcal{X} \,.
\end{aligned}
\right.
\end{equation}
As usual for zigzag boundary conditions, we observe that $0$ is an eigenvalue embedded in the essential spectrum of $(G, \dom G)$.
\begin{Theorem}
$0$ is an embedded eigenvalue  of $(G, \dom G)$ and the spectrum of $(G, \dom G)$ is not purely discrete.
\end{Theorem}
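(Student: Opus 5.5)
The plan is to exhibit an infinite-dimensional kernel of $(G,\dom G)$. Since $G$ is self-adjoint by the preceding theorem, an eigenvalue of infinite multiplicity lies in the essential spectrum. Hence $0\in\sigma_{\mathrm{ess}}(G)$, and the spectrum cannot be purely discrete.

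\textbf{Step 1: reduce the eigenvalue equation.} Setting $\lambda=0$ in \eqref{eigenproblem1} and taking $\psi_2=0$, the second component is trivially in $\mathcal{X}$. The only remaining condition is $(\partial_y-\partial_x^2+\delta)\psi_1=0$ in the distributional sense in $\Omega$, with $\psi_1\in\sii(\Omega)$. There is no boundary condition on $\psi_1$, since $\dom G$ only requires $G\psi\in\Hilbert$ and $\psi_2\in\mathcal{X}$. This is exactly the zigzag mechanism: one component is free on the boundary.

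\textbf{Step 2: construct kernel elements.} Separate variables with $\psi_1(x,y)=e^{\mu x}e^{(\mu^2-\delta)y}$ for any $\mu\in\mathbb{R}$. Then $\partial_y\psi_1=(\mu^2-\delta)\psi_1$ and $\partial_x^2\psi_1=\mu^2\psi_1$, so $(\partial_y-\partial_x^2+\delta)\psi_1=0$. Each such function is smooth and bounded on the bounded rectangle, hence lies in $\sii(\Omega)$. The spinors $\psi^{(\mu)}=\begin{pmatrix}\psi_1^{(\mu)}\\0\end{pmatrix}$ therefore belong to $\dom G$ and satisfy $G\psi^{(\mu)}=0$. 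Since exponentials $e^{\mu x}$ with distinct $\mu$ are linearly independent, taking e.g.\ $\mu=m\in\mathbb{N}$ shows $\dim\ker G=\infty$.

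\textbf{Step 3: conclude.} For a self-adjoint operator, the discrete spectrum consists of isolated eigenvalues of finite multiplicity. An eigenvalue of infinite multiplicity therefore belongs to $\sigma_{\mathrm{ess}}(G)$. One can also see this directly: an orthonormal sequence in $\ker G$ is a singular Weyl sequence (orthonormal, weakly null, $\|G\psi\|=0$). Thus $0$ is an eigenvalue embedded in the essential spectrum, and $\sigma(G)$ is not purely discrete.

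\textbf{Main obstacle.} The delicate point is confirming that $\dom G$, as defined, imposes no hidden boundary condition on $\psi_1$. This matters because the self-adjointness proof characterises $\dom G^*$ only through $\psi_2\in\mathcal{X}$ and $G\psi\in\Hilbert$. I would verify this directly by checking that $(\psi^{(\mu)},G v)=0$ for all $v\in\dom G$. Integration by parts moves $\partial_y-\partial_x^2+\delta$ onto $\psi_1^{(\mu)}$. The boundary terms vanish because they involve only traces of $v_2$ and $\partial_x v_2$, and $v_2\in\mathcal{X}$ is a limit of compactly supported functions in the norm controlling $H^1$ and $\partial_x^2$. I would do the integration by parts first for $v_2\in C_0^\infty(\Omega)$ and then pass to the limit using the $\mathcal{X}$-norm.
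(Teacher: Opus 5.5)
Your proof is correct and is essentially the paper's argument. Both set $\psi_2=0$, exhibit infinitely many linearly independent separable $\sii$-solutions of $(\partial_y-\partial_x^2+\delta)\psi_1=0$, and conclude from the infinite multiplicity that $0\in\sigma_{\mathrm{ess}}(G)$. You use $e^{\mu x}e^{(\mu^2-\delta)y}$ with real $\mu$, while the paper uses $e^{-(\eta+\delta)y}$ times $\cos\sqrt{\eta}x$ and $\sin\sqrt{\eta}x$, which is the same family with $\mu=\pm i\sqrt{\eta}$. Your ``main obstacle'' check is harmless but unnecessary: with $\psi_2=0$, membership in $\dom G$ follows directly from its definition, since $G\psi=0\in\Hilbert$ and $0\in\mathcal{X}$.
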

\begin{proof}
If we choose $\lambda=\psi_2=0$ in \eqref{eigenproblem1} then we have
\begin{equation}\label{eigenproblem2}
\left\{
\begin{aligned}
(\partial_y  -\partial^2_x + \delta) \psi_1 &= 0 \,,\\
\psi_1\in L^2(\Omega) \,.
\end{aligned}
\right.
\end{equation}
Let $\eta$ be an any positive eigenvalue of the one-dimensional Laplacian $-\partial_x^2 v(x) = \eta v(x)$ defined on $[0,a]$ without any boundary conditions then the corresponding eigenvalue reads $v(x)= E_1\cos\sqrt{\eta}x + E_2\sin\sqrt{\eta}x$ with constants $E_1, E_2 \in \mathbb{C}$. It yields that $\psi_1= e^{-(\eta+\delta)y}v(x)$ is a solution  of system \eqref{eigenproblem2}. As a result, $0$ is an eigenvalue of the operator  $(G, \dom G)$ with infinite multiplicity. Hence, $0$ belongs to the essential spectrum of the operator, and thus the spectrum of $(G, \dom G)$ is not purely discrete.
\end{proof}

Therefore, we succeed in setting up another model that guarantees the self-adjointness of the Dirac observable $G$ on a rectangular quantum dot and show the spectral property of the operator $(G, \dom G)$.

\section{Rectangular dots with MIT bag models and Dirichlet boundary conditions}\label{3}
Now we establish a quantum model defined on a rectangle, which has  vertical hard-wall boundaries and MIT boundary conditions on the horizontal sides of the  rectangle. 
Let us define the semi-Dirac semi-metals, subject to a mixed form of Dirichlet and infinite mass boundary conditions, i.e.
\begin{equation}
  H := 
  \begin{pmatrix}
    -i\partial_y & -\partial_x^2 + \delta \\
    -\partial_x^2 + \delta & i\partial_y 
  \end{pmatrix} \,,
\end{equation}
with the domain 
\begin{multline*}
  \dom H := \{
    \psi = 
  \begin{pmatrix}
    \psi_1 \\ \psi_2
  \end{pmatrix}   \in H^1(\Omega; \mathbb{C}^2),  \,  \quad \partial_x^2\psi \in \Hilbert, \psi_1 = \psi_2 =0 \in H^\frac{1}{2}(\gamma_3),  \psi_1 = \psi_2 =0 \in H^\frac{1}{2}(\gamma_4),\\
    \psi_1 = \psi_2  \in H^\frac{1}{2}(\gamma_1),     \psi_1 = -\psi_2  \in H^\frac{1}{2}(\gamma_2)
  \} \,.
\end{multline*}

...

%
%



\providecommand{\bysame}{\leavevmode\hbox to3em{\hrulefill}\thinspace}
\providecommand{\MR}{\relax\ifhmode\unskip\space\fi MR }
\providecommand{\MRhref}[2]{%
  \href{http://www.ams.org/mathscinet-getitem?mr=#1}{#2}
}
\providecommand{\href}[2]{#2}

\end{document}